\begin{document}

\newtheorem{theorem}{Theorem}[section]
\newtheorem{lemma}[theorem]{Lemma}
\newtheorem{corollary}[theorem]{Corollary}
\newtheorem{conjecture}[theorem]{Conjecture}
\newtheorem{cor}[theorem]{Corollary}
\newtheorem{proposition}[theorem]{Proposition}
\theoremstyle{definition}
\newtheorem{definition}[theorem]{Definition}
\newtheorem{example}[theorem]{Example}
\newtheorem{claim}[theorem]{Claim}
\newtheorem{remark}[theorem]{Remark}

\newenvironment{pfofthm}[1]
{\par\vskip2\parsep\noindent{\sc Proof of\ #1. }}{{\hfill
$\Box$}
\par\vskip2\parsep}
\newenvironment{pfoflem}[1]
{\par\vskip2\parsep\noindent{\sc Proof of Lemma\ #1. }}{{\hfill
$\Box$}
\par\vskip2\parsep}


\newcommand{\R}{\mathbb{R}}
\newcommand{\T}{\mathcal{T}}
\newcommand{\C}{\mathcal{C}}
\newcommand{\G}{\mathcal{G}}
\newcommand{\Z}{\mathbb{Z}}
\newcommand{\Q}{\mathbb{Q}}
\newcommand{\E}{\mathbb E}
\newcommand{\N}{\mathbb N}

\newcommand{\barray}{\begin{eqnarray*}}
\newcommand{\earray}{\end{eqnarray*}}

\newcommand{\beq}{\begin{equation}}
\newcommand{\eeq}{\end{equation}}


\renewcommand{\Pr}{\mathbb{P}}
\newcommand{\Prob}{\Pr}
\newcommand{\Exp}{\mathbb{E}}
\newcommand{\expect}{\mathbb{E}}
\newcommand{\1}{\mathbf{1}}
\newcommand{\prob}{\Pr}
\newcommand{\pr}{\Pr}
\newcommand{\filt}{\mathscr{F}}
\DeclareDocumentCommand \one { o }
{%
\IfNoValueTF {#1}
{\mathbf{1}  }
{\mathbf{1}\left\{ {#1} \right\} }%
}
\newcommand{\Bernoulli}{\operatorname{Bernoulli}}
\newcommand{\Binomial}{\operatorname{Binom}}
\newcommand{\Binom}{\Binomial}
\newcommand{\Poisson}{\operatorname{Poisson}}
\newcommand{\Exponential}{\operatorname{Exp}}


\newcommand{\link}{\mbox{lk}}
\newcommand{\Deg}{\operatorname{deg}}
\newcommand{\vertexsetof}[1]{V\left({#1}\right)}
\renewcommand{\deg}{\Deg}
\newcommand{\oneE}[2]{\mathbf{1}_{#1 \leftrightarrow #2}}
\newcommand{\ebetween}[2]{{#1} \leftrightarrow {#2}}
\newcommand{\noebetween}[2]{{#1} \centernot{\leftrightarrow} {#2}}
\newcommand{\Gap}{\ensuremath{\tilde \lambda_2 \vee |\tilde \lambda_n|}}
\newcommand{\dset}[2]{\ensuremath{ e({#1},{#2})}} 
\newcommand{\EL}{{ L}}
\newcommand{\ER}{{Erd\H{o}s--R\'{e}nyi }}
\newcommand{\zuk}{{\.{Z}uk}}


\newcommand{\frm}{\ensuremath{ 2\log\log m}}
\DeclareDocumentCommand \fuzz { o o }
{
\IfNoValueTF {#1}
  { \aleph_M }
  { \IfNoValueTF { #2 }
    { \aleph_{M}^{{#1}} }
    { \aleph_{M}^{{#1}}({#2})}
  } 
}
\newcommand{\csubzero}{c_{0}}
\newcommand{\csubone}{c_{1}}
\newcommand{\csubtwo}{c_{2}}
\newcommand{\csubthree}{c_{3}}
\newcommand{\csubstar}{c_{*}}
\newcommand{\rsp}{1-C\exp(-md^{1/4}\log n)}
\newcommand{\lc}{\ensuremath{ \operatorname{light}(x,y)}}
\newcommand{\hc}{\ensuremath{ \operatorname{heavy}(x,y)}}
\DeclareDocumentCommand \pam { O{m} }
{
P_{{#1}}
}

\title{The Power of 2 Choices over Preferential Attachment}
\author{Yury Malyshkin}
\address{Department of Mathematics and Mechanics, Moscow State University\\
Laboratory of Solid State Electronics, Tver State University}
\email{yury.malyshkin@mail.ru}
\author{Elliot Paquette}
\address{Department of Mathematics, Weizmann Institute of Science}
\email{elliot.paquette@gmail.com}
\thanks{YM gratefully acknowledges the support of the Weizmann Institute of Science, where this work was performed.
EP gratefully acknowledges the support of NSF Postdoctoral Fellowship DMS-1304057.
}
\date{\today}
\maketitle

\begin{abstract} 
We introduce a new type of preferential attachment tree that includes choices in its evolution, like with Achlioptas processes.  At each step in the growth of the graph, a new vertex is introduced.  Two possible neighbor vertices are selected independently and with probability proportional to degree.  Between the two, the vertex with smaller degree is chosen, and a new edge is created.  We determine with high probability the largest degree of this graph up to some additive error term.
\end{abstract}

\section{Introduction}
In the present work we consider an alteration of the preferential attachment model, in the spirit of the Achlioptas processes (see~\cite{Achlioptas,Riordan}).  The preferential attachment graph is a time-indexed sequence of graphs constructed the following way.  We start with a single edge, and at each time step we add a new vertex.  We then select an old vertex with probability proportional to the degree of the vertex, and we add a new edge between the new vertex and the selected vertex.
 This model is widely studied and many of its properties are known, such as the maximum degree, the limiting degree distribution, and the diameter of the graph (for instance see \cite{barabasi,FFF04,DvdHH,Mori}). In particular, in \cite{FFF04} it was shown that at time $t$, for any function $f$ with $f(t)\rightarrow\infty$ as $t\rightarrow\infty$, $\frac{t^{1/2}}{f(t)}\leq\Delta(t)\leq t^{1/2}f(t)$ with high probability, where $\Delta(t)$ is the highest degree of the preferential attachment graph at time $t$. In \cite{Mori}, this was strengthened to say that over the course of all time, $\Delta(t)t^{-1/2}$ converges almost surely to a non-degenerate positive random variable.  We say that some event $\mathcal{E}_{n}$ occurs with high probability as $n\rightarrow\infty$ if $\Pr(\mathcal{E}_{n})\rightarrow 1$ as $n\rightarrow\infty$. When it is clear which parameter is turning to infinity we omit it.

We will consider an alteration of this model that allows limited choice into its evolution.  Let us define a sequence of trees $\{ \pam \}$ given by the following rule.  Let $P_1$ be the one-edge tree.  Given $P_{m-1},$ define $P_m$ by first adding one new vertex $v_{m+1}$.  Let $X^1_m$ and $X^2_m$ be i.i.d. vertices from $\vertexsetof{ \pam }$ (here $\vertexsetof{P}$ is the set of vertices of $P$) chosen with probability 
\[
\Pr \left[
X^1 = w 
\right] = \frac{\deg w}{2m}.
\]
Note that as the graph has $m$ edges, $\sum_{w}\deg w=2m$. Finally, create a new edge between $v_{m+1}$ and $Y_m,$ where $Y_m$ is whichever of $X^1_m$ and $X^2_m$ has smaller degree. In the case of a tie, choose according to an independent fair coin toss.  We call this the \emph{min-choice preferential attachment tree}.  


In \cite{DSKrM}, similar models of randomly evolving networks were introduced.  Among others, they study a model in which one again chooses two vertices $X^1_m$ and $X^2_m$ and chooses the minimal degree vertex.  However, they study the case where these vertices are picked with uniform probability.  

This is in turn strongly related to the original model of \cite{ABKU99}, in which this type of choice was introduced to study load balancing. In its simplest form, this amounts to studying balls thrown randomly into bins.  Suppose we have $n$ bins and $n$ balls, and on each step we put a new ball into one of the bins, choosing the bin randomly and uniformly.  In this model the number of balls in the most loaded bin is about $\log n/\log \log n,$ as $n\rightarrow\infty$. Adding two choices to this model significantly reduces this number. More precisely, we alter the model so that at each step we independently select two bins and put the ball in the bin that contains fewer balls. In the case that they hold the same number of balls, we choose the bin according to an independent fair coin toss. As a result the number of balls in the most loaded bin is $\log \log n/\log 2 + \Theta (1)$.  

\begin{figure}
\centering
	\begin{subfigure}[t]{0.45\textwidth}
	\includegraphics[width=\textwidth]{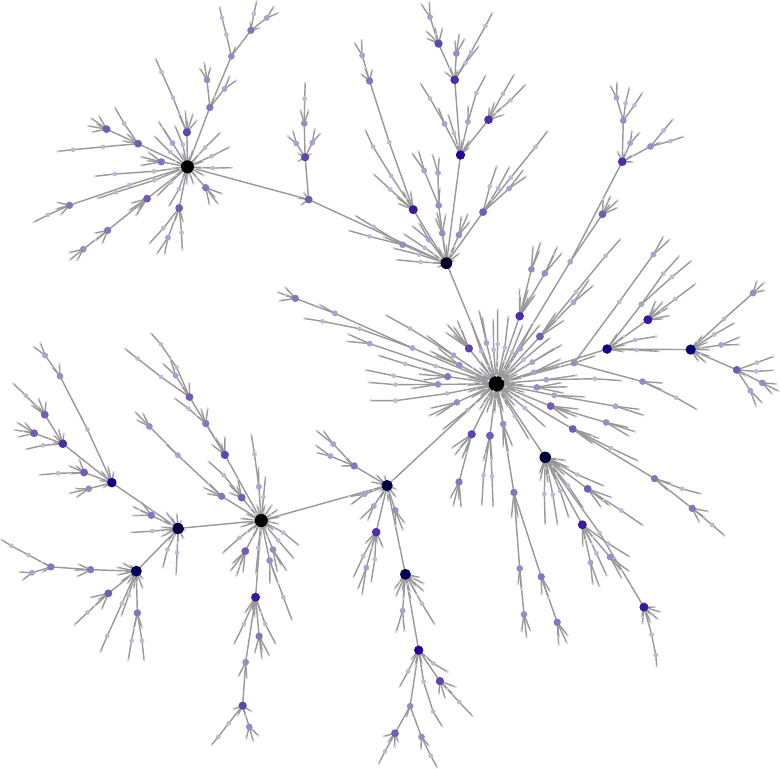}
	\caption{The preferential attachment tree after $1000$ vertices have been added.}
	\end{subfigure}
	\begin{subfigure}[t]{0.45\textwidth}
	\includegraphics[width=\textwidth]{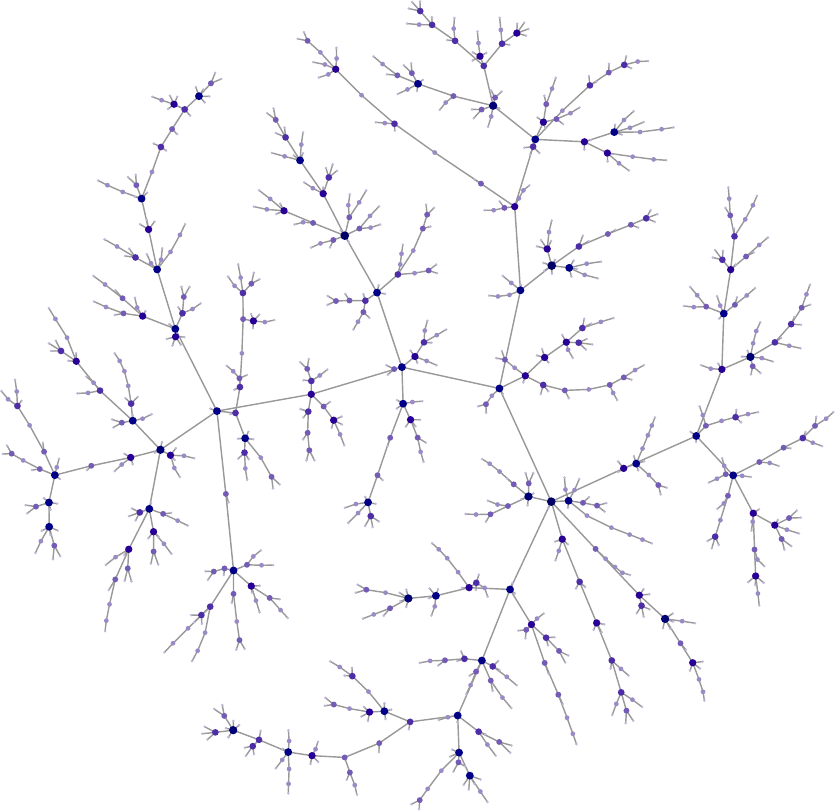}
	\caption{The min-choice preferential attachment tree after $1000$ vertices have been added.}
	\end{subfigure}
\end{figure}

There are a few differences between our model and the bin and ball model with two choices. First, the two-choice preferential attachment model tends to select higher degree vertices because of the size biasing.  Second, the ball and bin model tends to select empty bins frequently at the beginning of the process, while adding a new vertex to the two-choice preferential attachment model always increases the degree of an existing vertex (this is also true in the model of~\cite{DSKrM}, but it alone does not greatly increase the maximum degree).  Both influences tend to create higher degree vertices and more loaded bins.  Note that the combined influences of these effects have a large impact in the models without two choices.  The degree distribution in the preferential attachment model follows a power law~\cite{barabasi}, while the load distribution in the bin and ball model can be checked to have exponential tails.

Our main theorem shows that these differences are in some sense less powerful than the power of two choices.
\begin{theorem}
\label{thm:max_degree}
With high probability, the maximum degree of $P_m$ is $\frac{\log \log m}{\log 2} + \Theta(1).$
\end{theorem}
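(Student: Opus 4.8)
The starting point is to track, for each integer $k\ge 1$, the number $Z_k(m)$ of vertices of $P_m$ of degree at least $k$ together with the total degree $S_k(m):=\sum_{w:\,\deg w\ge k}\deg w$ of those vertices. Degrees never decrease, so $Z_k(m)$ is nondecreasing in $m$ and $Z_k(m)-Z_k(m-1)=\one[\deg Y_m=k-1]$. Since $\deg Y_m=\min(\deg X^1_m,\deg X^2_m)$, we have $\deg Y_m\ge j$ exactly when both sampled vertices have degree at least $j$; writing $E_m$ for the number of edges present when the step-$m$ choices are made, this gives
\[
\E[\,Z_k(m)-Z_k(m-1)\mid\filt_{m-1}\,]
=\Bigl(\tfrac{S_{k-1}(m-1)}{2E_m}\Bigr)^{2}-\Bigl(\tfrac{S_k(m-1)}{2E_m}\Bigr)^{2}
\le\Bigl(\tfrac{S_{k-1}(m-1)}{2E_m}\Bigr)^{2}.
\]
Combined with the identity $S_k(m)=kZ_k(m)+\sum_{j>k}Z_j(m)$ — which says that, as soon as $Z_j$ decays in $j$, $S_k$ is essentially $kZ_k$ — this converts the evolution into a recursion in $k$. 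Theorem~\ref{thm:max_degree} amounts to showing that $Z_k(m)$ becomes doubly-exponentially small in $k$ (for the upper bound) yet is still at least $1$ (for the lower bound) when $k$ is near $\tfrac{\log\log m}{\log 2}$; the constant $1/\log 2$ comes from the square in the display, which (up to polynomial-in-$k$ factors produced by the size bias) doubles $\log\!\bigl(m/Z_k\bigr)$ at each level, exactly as in the balls-in-bins analysis of~\cite{ABKU99}.

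For the upper bound I would prove by induction on $k$ that there exist constants $c_k$ with $Z_k(m')\le c_km'$ for all $m'\le m$ with high probability. Given the level-$k$ bound, the identity above yields $S_k(m')\le(kc_k+o(c_k))m'$ once the higher-level constants have begun to shrink, so $\E[Z_{k+1}(m)]\le\sum_{m'\le m}\bigl(S_k(m'-1)/2E_{m'}\bigr)^2\lesssim m(kc_k)^2$; a martingale concentration inequality of Freedman type, applied on the (high-probability) event that the level-$k$ bound holds, upgrades this to $Z_{k+1}(m')\le c_{k+1}m'$ for all $m'$ with $c_{k+1}\asymp(kc_k)^2$, after which a union bound over the $O(\log\log m)$ relevant values of $k$ finishes. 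Writing $u_k=\log(1/c_k)$, the recursion reads $u_{k+1}\ge 2u_k-O(\log k)$, and unrolling it (using that $\sum_i 2^{-i}\log i$ converges) gives $u_k\ge 2^{\,k-k_0}\bigl(u_{k_0}-O(\log k_0)\bigr)$ provided the induction is seeded at a fixed level $k_0$ with $c_{k_0}$ below a threshold of order $k_0^{-O(1)}$; hence $Z_k(m)<1$ — no vertex of degree $\ge k$ — once $2^{k}\gtrsim\log m$, i.e.\ $k\ge\tfrac{\log\log m}{\log 2}+O(1)$. Seeding the induction is the only place where the recursion is not self-starting, since for small degrees the polynomial factor competes with the square; it is handled by a direct analysis of the degree distribution at small degrees — in particular one shows that more than half of all vertices remain leaves, which already forces $c_2$ into the stable range, and a bounded number of further steps of the recursion (where $Z_k=\Theta(m)$, so the concentration is tight) drives $c_k$ into the doubly-exponential regime.

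The lower bound is mirror-symmetric. I would show by induction on $k$, up to $k=\lfloor\tfrac{\log\log m}{\log 2}\rfloor-C$, that $Z_k(m')\ge e_km'$ for all $m'\in[m/2,m]$ with high probability, with $e_{k+1}\asymp(ke_k)^2$: here one uses $S_k\ge kZ_k$ in the favourable direction while the competing term $S_{k+1}$ is controlled from above by the upper-bound estimates already established (so that $S_{k+1}(m')\le\tfrac12S_k(m')$ once $c_k$ is small), giving $\E[\,Z_{k+1}(m)-Z_{k+1}(m/2)\mid\text{(good event)}\,]\gtrsim m(ke_k)^2$, and then a lower-tail Chernoff bound for sums of conditionally Bernoulli increments yields $Z_{k+1}(m)\ge e_{k+1}m$. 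The base case is again leaf counting — a constant fraction of vertices have degree exactly $2$, so $Z_2(m')=\Omega(m')$ uniformly on the window (by a maximal inequality) — and, as in the upper bound, $\log(1/e_k)$ roughly doubles at each step, so $e_km$ remains above any quantity needed for the union bound for all $k\le\tfrac{\log\log m}{\log 2}-O(1)$; on that event $P_m$ has a vertex of that degree.

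The step I expect to be the main obstacle is the concentration bookkeeping that must be carried out at each of the $\Theta(\log\log m)$ levels: the increments of $Z_{k+1}$ are conditionally Bernoulli with a \emph{random} success parameter (a function of $S_k$), so every concentration estimate has to be run on a suitably stopped version of the process, conditioned on the success of all earlier levels, with the per-level failure probabilities kept summable — all while preventing the size-bias factors $k$ from accumulating into more than an $O(1)$ shift of the threshold (which is exactly why one must seed the induction at a level where $c_{k_0}$ is already small rather than at $k=1$), and while maintaining uniformity in $m'$ over the time window (so that the level-$k$ conclusion can feed the level-$(k+1)$ estimate at every intermediate time), which is arranged by a maximal inequality.
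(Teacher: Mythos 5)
Your upper-bound scheme is essentially the paper's diagonal recursion (linear-in-time bounds on the count/size-biased weight of degree-$\ge k$ vertices, with constants roughly squaring at each level, seeded by a law-of-large-numbers analysis at small fixed degrees, as in Lemma~\ref{lem:proc_start} and Lemma~\ref{lem:diagonal_recurrence}), but it has a genuine gap at the endgame. The inductive statement $Z_k(m')\le c_k m'$ can only be established on a window $m'\ge t_k$, where the burn-in $t_k$ must both support concentration (so $c_k t_k$ must exceed roughly the log of the number of events in your maximal/union bound) and absorb the uncontrolled mass accumulated before $t_{k-1}$; this forces $t_k\gtrsim 1/c_k$ up to small factors. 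Consequently the recursion cannot be run until $c_k m<1$: the window is empty once $c_k\lesssim 1/m$, and already at $c_k\approx m^{-1/2}$ the burn-in is of order $m^{1/2}$. Worse, when the linear bounds valid on these doubly-exponentially spaced windows are converted into a single bound in $j$, one only gets $F_j(k)\lesssim j^{1-\beta_0}$ with $\beta_0<\tfrac12$ (within each window the bound is constant while $j$ roughly squares), so the expected number of later arrivals at the next level, $\sum_j j^{-2\beta_0}$, still diverges, and ``$Z_k(m)<1$'' does not follow from the recursion alone. Closing this requires a different argument at the top, which is exactly the content of Lemma~\ref{lem:supercharger} (boosting the exponent past $\tfrac12$ by bounds that improve as a power of $j$ rather than by a constant per level) and Lemma~\ref{lem:final_recurrence} (the count freezes from a time $j_0$ onward, \emph{together with} the separate fact that no vertex has degree $k_*+O(1)$ by time $j_0$, a point your sketch never addresses). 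Labeling all of this ``concentration bookkeeping'' misses the one place where the naive recursion genuinely fails.

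For the lower bound you take a genuinely different route from the paper, which couples the tree against two-choice balls-in-bins via $N^0_j(k)\le F_j(k)$ (Lemma~\ref{coupling_bab}) and imports the known max-load lower bound of~\cite{MRS01}. A mirror-image induction as you propose would make the proof self-contained, but as stated it is circular: if the level-$k$ bound is only assumed on $[m/2,m]$, then level-$(k+1)$ vertices have only been accumulating since time $m/2$, so $Z_{k+1}(m')\ge e_{k+1}m'$ cannot hold for $m'$ near $m/2$. You would need nested windows (say level $k$ valid on $[(1-2^{-k})m,\,m]$), which costs only an extra $O(k)$ in the recursion for $\log(1/e_k)$ and is harmless, but it must be set up, and the upper-bound estimates used to force $S_{k+1}\le\tfrac12 S_k$ must be arranged to hold on the same windows. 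The paper's coupling avoids all of this at the price of invoking an external theorem.
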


Before going deep into the proof, we will outline the approach.  Define $F_m(k)$ to be the weight under the size bias distribution given to vertices of the graph $P_{m}$ of degree greater than $k,$ i.e.
\[
F_m(k) = \sum_{i=1}^{m}  (\deg v_i) \one[\deg v_i \geq k].
\]
Note that $F_m(1) = \sum_{i=1}^m \deg v_i = 2m,$ as there are always $m$ edges in the graph.  If it holds that $F_{m}(k)>0$ for some $k>0,$ there is a vertex of $P_{m}$ with degree at least $k,$ while if $F_{m}(k)<k$ then all vertices of $P_{m}$ have degrees less then $k$. We will get an estimate on the maximal degree by controlling $F_{j}(k)/2j$. 

Now $F_m(k)$ as a function of $k$ is a Markov chain in $m$ which evolves according to the following rule, valid for $k > 1,$
\begin{equation}
\label{eq:evolution}
F_{m+1}(k) - F_m(k) =
\begin{cases}
1, & \Pr = \left( \frac{F_m(k)}{2m}\right)^2 \\
k, & \Pr = 
\left( \frac{F_m(k-1)}{2m}\right)^2
-\left( \frac{F_m(k)}{2m}\right)^2
\\
0, & \text{otherwise}. 
\end{cases}
\end{equation}
The key structure we use is that good control over $F_j(k-1)$ for some range of $j$ yields better control over $F_j(k)$ after waiting long enough for averaging to take effect.

For small $k,$ we need some initial estimate.  Thus for $k>1$ we define the function $\rho(k,t)$ given by
\begin{equation}
\label{eq:a_kdef}
\rho(k,t) =
\frac{\sqrt{4+(k-1)k\cdot t^2} - 2}{k-1}.
\end{equation}
We let $\alpha_1 = 2$ and define $\alpha_k = \rho(k,\alpha_{k-1}),\;k\geq 2$ inductively. 

These $\alpha_k$ decay doubly exponentially, but only after a long enough burn-in time.  For these initial steps, very careful analysis is required to ensure that they even decrease.  For this reason, we begin by making estimates for the first ten $\alpha_k.$   

\noindent The following rational upper bounds are easily verified inductively using~\eqref{eq:a_kdef} and monotonicity.
\begin{figure}[h]
\caption{Rational upper bounds for $\alpha_k,$ $1 \leq k \leq 10.$}
\label{fig:initial}
\begin{tabular}{l*{10}{c}}
Exact Value &
$\alpha_1$ &
$\alpha_2$ &
$\alpha_3$ &
$\alpha_4$ &
$\alpha_5$ &
$\alpha_6$ &
$\alpha_7$ &
$\alpha_8$ &
$\alpha_9$ &
$\alpha_{10}$ \\
\midrule
Bound &
$2$ &
$\tfrac32$ &
$\tfrac98$ &
$\tfrac45$ &
$\tfrac35$ &
$\tfrac25$ &
$\tfrac14$ &
$\tfrac18$ &
$\tfrac{1}{30}$ &
$\tfrac{1}{300}$ 
\end{tabular}
\end{figure}

 From random walk comparisons, we can show that $F_j(k)/j$ is nearly $\alpha_k$ holding $k$ fixed and making $j$ large.  As a corollary, we get the convergence of the empirical degree distribution of the tree (see Remark~\ref{rem:edd}).  After gaining some initial control, we continue by improving the estimates for larger and larger $k.$  In all, we go through 4 steps.
\begin{enumerate}
    \item We get starting estimates for $k$ less than some fixed $k_{0}>0$ and $\omega(m)\leq j\leq m,$ $\omega(m)\rightarrow\infty$ as $m\rightarrow\infty$ (see Lemma~\ref{lem:proc_start}).
    \item We get improved estimates for $k_{0}\leq k\leq k_{*}(m)$ that 
decrease doubly exponentially in $k$ but are only valid for
$\phi(m,k)\leq j\leq m,$ where $\phi(m,k)$ increases extremely rapidly in $k$ (see Lemma~\ref{lem:diagonal_recurrence}).
    \item We then get estimates of the form $F_j(k) \leq 2j^{1-\beta}$ for some $0<\beta<1$ that hold for $k > k_{*}(m)$ and $(\log\log m)^{M}\leq j\leq m.$  By increasing $k$ finitely many times, we can make $\beta$ very close to $1$ (see Lemma~\ref{lem:supercharger}).
    \item Once $\beta$ is sufficiently large for some $k = k_*(m) + r$, we show that in fact $F_j(k+1)$ must be $0$ (see Lemma~\ref{lem:final_recurrence}).
\end{enumerate}  

\section{Discussion}

Theorem~\ref{thm:max_degree} answers a question about the degree sequence of the tree, which uses no topological features of the graph.  In the case of the standard preferential attachment model, the diameter is known to be logarithmic~\cite{Pittel, DvdHH}.  It would be interesting to know if this remains the case in the min-choice preferential attachment tree or if the diameter is larger.  In~\cite{RuToVa}, the authors derive the limiting law of the preferential attachment tree viewed from a random vertex; a deeper, narrower tree should be expected in the case of the min-choice tree.

The \emph{max-choice} preferential attachment model also presents an interesting model.  This corresponds to choosing the vertex of larger degree instead of smaller degree.  For this model, we conjecture the largest degree of the tree with $m$ edges is of order $m / \log m.$  It would also be interesting to see if the two choices had a significant impact on the diameter of the graph.

The preferential attachment model fits naturally inside a larger class of processes where the new vertex chooses a neighbor in the old graph with probability proportional to some power $\alpha$ of the degree, which was first studied in~\cite{KrReLe}.  In the case that $\alpha > 1,$ the tree has a single dominant vertex~\cite{OlSp}.  This ``persistent hub'' (using terminology of \cite{DeMo}) has degree of order $m,$ while all other vertices have bounded degree.  The min-choice adaptation can be made to these models as well, first sampling two vertices with probability proportional to the power $\alpha$ of the degree and then choosing the vertex with minimal degree.  Simulations suggest that for $\alpha $ large enough (around $1.8$) a single vertex dominates the others, while for $\alpha$ up to $1.5$ the tree remains more diffuse.  This leaves open the possibility of a sharp transition in behavior for some critical value of $\alpha.$

Note that the proof remains the same if instead of two random choices we consider $d$ random choices, where $d\geq 2$ is a fixed natural number. In this case, with high probability the maximum degree will be $\frac{\log \log m}{\log d} + \Theta(1).$  One interesting question is whether or not we obtain bounded maximum degree if we increase $d$ over the course of the process.  We conjecture that if $d$ is of order $\log m$ ( precisely, $d=\lfloor A\log m\rfloor$, where $A$ some positive constant) that the max-degrees of the min-choice preferential attachment trees $P_m$ are tight.
It is not clear if this is true for all $A>0$, or if there is some critical $A_{0}$, starting with which the maximum degree has this property. 

\section{Proofs}
For the first step we prove the following.
\begin{lemma}
\label{lem:proc_start}
For any $\epsilon > 0,$ any $\omega(m) \to \infty,$ $\omega < m$ and any $k \geq 1$ fixed, we have that 
\[
\Pr \left[
\exists j,~m \geq j \geq \omega(m)~:~ | F_j(k) - \alpha_k j | > \epsilon j 
\right] \to 0.
\]
\end{lemma}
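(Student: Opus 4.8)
The plan is to analyze, for fixed $k$, the process $F_j(k)$ via a martingale comparison with the deterministic ODE whose solution is $\alpha_k t$. The starting point is the evolution rule~\eqref{eq:evolution}: conditionally on $F_j(\cdot)$, the expected increment of $F_{j}(k)$ is
\[
\Exp\!\left[ F_{j+1}(k) - F_j(k) \,\middle|\, \filt_j \right]
= \left( \tfrac{F_j(k)}{2j}\right)^2 + (k-1)\left[\left( \tfrac{F_j(k-1)}{2j}\right)^2 - \left( \tfrac{F_j(k)}{2j}\right)^2\right].
\]
Writing $g_j = F_j(k)/(2j)$ and $h_j = F_j(k-1)/(2j)$, one finds after simplification that the drift of $g_j$ is, to leading order, $\frac{1}{j}\bigl( -g_j + \tfrac{k}{2}h_j^2 - \tfrac{k-1}{2}g_j^2\bigr)$ --- exactly the quantity that vanishes when $g_j \equiv \tfrac{h_j\sqrt{\text{\tiny$\cdots$}}}{}$; more to the point, if $h_j$ is frozen at $\alpha_{k-1}/2$, the fixed point of this drift is $g_\ast$ with $-g_\ast + \tfrac{k}{2}(\alpha_{k-1}/2)^2 - \tfrac{k-1}{2}g_\ast^2 = 0$, and solving the quadratic gives precisely $2g_\ast = \rho(k,\alpha_{k-1}) = \alpha_k$. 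So $\alpha_k$ is forced on us as the unique attracting equilibrium, and the scheme is an induction on $k$: assuming the bound $|F_j(k-1) - \alpha_{k-1} j| \le \epsilon' j$ for all $j \ge \omega$ (the case $k=1$ is exact since $F_j(1) = 2j$), we propagate it to level $k$.

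The key steps, in order. First, reduce to $k \ge 2$ and fix the small constant $\epsilon' \ll \epsilon$ for which the level-$(k-1)$ statement holds; intersect with that high-probability event, so we may treat $h_j \in [\tfrac{\alpha_{k-1}}{2} - \epsilon', \tfrac{\alpha_{k-1}}{2} + \epsilon']$ as deterministic bounds. Second, introduce the comparison: let $\overline\alpha_k$ (resp.\ $\underline\alpha_k$) be the equilibrium of the drift with $h_j$ replaced by its upper (resp.\ lower) bound; by continuity of $\rho$ in its second argument these are within $O(\epsilon')$ of $\alpha_k$. Third, run a standard super/sub-martingale hitting-time argument: define $M_j = F_j(k) - 2\overline\alpha_k j$ (up to a correction compensating the lower-order terms), show $M_j$ is a supermartingale once $F_j(k) > (2\overline\alpha_k + \delta) j$, and use that whenever the process is in the ``bad upper region'' the drift pushes it back at rate $\gtrsim \delta/j$ while the martingale increments are bounded by $k$. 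Summing the variance gives $\sum_{j\ge\omega} \Exp[(\Delta M_j)^2] = O(k^2/\omega) \to 0$, so by a maximal inequality (Doob, or the Azuma-type bound for $\sum 1/j$-scaled increments) the process cannot stay $\delta j$ above $2\overline\alpha_k j$ for the whole window $[\omega, m]$; an identical argument on the lower side with $\underline\alpha_k$. Fourth, a short deterministic lemma: because the drift is a restoring force with coefficient of order $1/j$, once the walk enters the strip $|F_j(k) - \alpha_k j| \le (\delta + O(\epsilon'))j$ it returns quickly after any excursion, so the uniform-in-$j$ bound holds; choosing $\epsilon', \delta$ small compared to $\epsilon$ closes the induction.

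The main obstacle is the uniformity in $j$ over the whole range $\omega(m) \le j \le m$ together with the fact that the natural Lyapunov drift is only of size $1/j$, so the process is only weakly self-correcting and individual excursions away from $\alpha_k j$ can be as large as (a constant times) $\sqrt{j\log j}$ before the drift reins them in. One must show these fluctuations are $o(j)$ \emph{simultaneously} for all $j$ in the window; this is where the choice of $\omega(m)\to\infty$ enters --- the summed variance $\sum_{j\ge\omega}(\text{increment})^2/j^2$ is $O(1/\omega)$, which is exactly what makes a Doob or Freedman maximal inequality applicable and gives a probability tending to $0$. A secondary subtlety is that $F_j(k-1)$ is not actually frozen --- it fluctuates within its own $\epsilon' j$ band --- so one cannot literally use a fixed ODE; the clean way around this is to sandwich between the two autonomous chains driven by the extreme values of $h_j$ and verify, using the monotonicity of $F_\cdot(k)$ in the environment (a coupling on the driving i.i.d.\ choices), that $F_j(k)$ lies between the corresponding coupled processes, each of which is then handled by the martingale argument above.
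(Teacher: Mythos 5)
Your overall route matches the paper's in spirit: induction on $k$, freezing $F_j(k-1)/(2j)$ in a $\delta$-band around $\alpha_{k-1}/2$, and identifying $\alpha_k$ as the attracting fixed point of the drift (the same identity $(k-1)\alpha_k^2+4\alpha_k=k\alpha_{k-1}^2$ the paper exploits when it computes $\Exp\eta_i'=\alpha_k+\epsilon/4$); your concentration mechanism (the normalized process $F_j(k)/(2j)$, restoring drift of size $\Theta(1/j)$, and a Doob/Freedman maximal inequality with summed conditional variance $O(1/\omega)$) is a legitimate alternative to the paper's device of i.i.d.\ stochastic domination plus a renewal decomposition of excursions and Hoeffding sums. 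However, there is a genuine gap at the left edge of the window. Your hitting-time step only shows that the process enters the strip at some random time in $[\omega(m),m]$, and your step Four only controls $j$ \emph{after} that entry; the lemma requires the bound for \emph{all} $j\geq\omega(m)$, and $\omega(m)$ is allowed to be, say, $m/2$, in which case the total restoring drift available inside the window is only of order $\delta\log(m/\omega)=O(\delta)$ and cannot force entry at all, let alone by time $\omega(m)$. You must start the analysis at an earlier time $\omega_0(m)\to\infty$ with $\omega/\omega_0$ at least a large constant (the induction hypothesis, valid for any $\omega_0\to\infty$, permits this) and prove entry into the strip by time $\omega(m)$; this burn-in is precisely the role of the paper's explicit choice $\omega_0(m)=\tfrac{3\epsilon}{8(2-\alpha_k-\epsilon/8)}\,\omega(m)$ and its bound $\Pr[\pi>\omega(m)]\leq C_1e^{-C_2\omega(m)}$, and it is missing from your outline.

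A second problem is the closing ``sandwich between two autonomous chains driven by the extreme values of $h_j$'': this does not follow from monotonicity, because with the environment fixed the one-step kernel of $F_\cdot(k)$ is \emph{not} stochastically monotone in its own state --- a larger $F_j(k)$ makes the $+k$ jump \emph{less} likely (its probability is $h^2-(F_j(k)/(2j))^2$) while only the $+1$ jump becomes more likely, so a pathwise ordering against an autonomous upper chain can break whenever the two processes are within $k-1$ of each other. The paper avoids exactly this by dominating the increments along the true trajectory (keeping the $+1$ probability tied to the true state and only enlarging the $+k$ probability), and since the resulting i.i.d.\ comparison is valid only while $F_j(k)/j\geq\alpha_k$, it then needs the excursion/renewal structure with the tail function $p(m,r_1,r_2)$. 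Fortunately your main martingale argument does not need the autonomous sandwich --- the band on $h_j$ can be inserted directly into the conditional drift bound --- so that remark should be dropped rather than repaired; also note that the summed-variance bound $O(k^2/\omega)$ applies to the normalized process $F_j(k)/(2j)$, not to $M_j=F_j(k)-2\overline\alpha_k j$ as you define it, whose increments are of order $k$.
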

\begin{remark}
\label{rem:edd}
Using the initial estimates in Figure~\ref{fig:initial} together with the bound that $\alpha_k < k \alpha_{k-1}^2,$ we can easily establish that $\alpha_k \to 0$ as $k \to \infty.$  Hence, from this lemma we get the tightness of the empirical degree distribution and its weak convergence to the distribution described by $\alpha_k$ as $j\to \infty.$
\end{remark}
\begin{proof}

We prove this lemma using induction over $k$.
The base case, \(k = 1\), is immediate as \(F_{j}(1) = 2j\) for all \(j\).
Define the event \(\mathcal{A}\) by
\[
\mathcal{A}(\omega_{0}(m), k-1,\delta)=
\{~\forall~j, \omega_{0}(m) \leq j< m, |F_{j}(k-1)-ja_{k-1}|<\delta j\}.
\]
From the induction hypothesis, we have that \(\mathcal{A}\) holds with high probability for any $\delta > 0$ fixed and any $\omega_0(m) \to \infty.$  Therefore, it suffices to show that there is a $\delta >0$ and a $\omega_0(m) \to \infty$ so that
\[
\Pr\left[
\mathcal{A}(\omega(m),k,\epsilon)^c \cap
\mathcal{A}(\omega_{0}(m), k-1,\delta)
\right] \to 0.
\]
We will only prove the upper bound, i.e. that \(F_j(k) - \alpha_k j \leq \epsilon j\) with high probability.  The lower bound follows from an identical argument.

Let $\omega_0(m) \to \infty$ and $\delta>0$ be considered fixed, with appropriate values to be determined later.
For $j$ such that $\omega_{0}(m)\leq j\leq m,$
\[
F_{j}(k)=
F_{\omega_{0}(m)}+
\sum_{i=\omega_{0}}^{j} \leq 2\omega_{0}(m)+
\sum_{i=\omega_{0}}^{j}\chi_{i},
\]
almost surely,
where 
\[
\chi_{i}=F_{i}(k) - F_{i-1}(k) =
\begin{cases}
1, & \Pr = \left( \frac{F_{i-1}(k)}{2(i-1)}\right)^2 \\
k, & \Pr = 
\left( \frac{F_{i-1}(k-1)}{2(i-1)}\right)^2
-\left( \frac{F_{i-1}(k)}{2(i-1)}\right)^2
\\
0, & \text{otherwise}. 
\end{cases}
\]
Note that $\frac{F_{i-1}(k-1)}{2(i-1)}\leq a_{k-1}/2+\delta/2$. 
We may contruct variables $\eta_{i}$ whose law given \( \sigma(F_{i-1}(k)) \)
is 
\[
\eta_{i}=
\begin{cases}
1, & \Pr = \left( \frac{F_{i-1}(k)}{2(i-1)}\right)^2 \\
k, & \Pr = 
\left( a_{k-1}/2+\delta/2\right)^2
-\left( \frac{F_{i-1}(k)}{2(i-1)}\right)^2
\\
0, & \text{otherwise}
\end{cases}
\]
so that on the event
\(
\mathcal{A}(\omega_{0}(m), k-1,\delta),
\)
we have $\chi_{i}\leq\eta_{i}$.
 
Then it follows that
\[
F_{j}(k)\leq 2\omega_{0}(m)+\sum_{i=\omega_{0}}^{j}\eta_{i}.
\]

Let $\pi$ be the first $j \geq \omega_{0}(m)$ so that $F_{j}(k) \leq (a_{k}+\epsilon/2)j$. We will estimate the probability that $\pi\leq\omega(m)$. Set $g_{i}=F_{i}(k)/(2i)-a_{k}$. If $\omega_{0}(m)\leq i <\pi$, then $g_{i}>\epsilon/2.$ 

We can expand the law of $\eta_{i}$ as
\[
\eta_{i}=
\begin{cases}
1, & \Pr = a_{k}^{2}/4+g_{i}a_{k}/2+g_{i}^{2}/4 \\
k, & \Pr = a_{k-1}^{2}/4 - a_{k}^{2}/4 + \delta a_{k-1}/2 + \delta^{2}/4 - g_{i}a_{k}/2 - g_{i}^{2}/4
\\
0, & \text{otherwise}. 
\end{cases}
\]
Choose $\delta$ such that $\delta a_{k-1}/2 + \delta^{2}/4=\epsilon/(4k)$. 
We may construct i.i.d. variables 
\[
\eta_{i}^{'}=
\begin{cases}
1, & \Pr = a_{k}^{2}/4\\
k, & \Pr = a_{k-1}^{2}/4 - a_{k}^{2}/4+\epsilon/(4k)
\\
0, & \text{otherwise} 
\end{cases}
\]
so that $\eta_i \leq \eta_i'$ on 
\(\mathcal{A}(\omega_{0}(m), k-1,\delta)\)
for $i>\omega_{0}(m)$ such that $F_{i}(k)/i\geq a_{k}.$  Set $\rho$ to be the first time after $\pi$ that $F_i(k)/i \leq a_k.$

Note that from the definition of $a_{k},$ it follows that $\mathbb{E}\eta_{i}^{'}=a_{k}+\epsilon/4$. Now we obtain the estimate
\begin{align*}
\mathbb{P}(\pi>\omega(m)) 
&\leq \mathbb{P}(\sum_{i=\omega_{0}}^{\omega(m)} \eta_{i}+2\omega_{0}(m)>a_{k}\omega(m)+\epsilon\omega(m)/2) \\
&\leq \mathbb{P}(\sum_{i=\omega_{0}}^{\omega(m)} \eta_{i}^{'}>a_{k}(\omega(m)-\omega_{0}(m))+\epsilon\omega(m)/2-\omega_{0}(m)(2-a_{k})).
\end{align*}
Choose 
\(
\omega_{0}(m)=\frac{3\epsilon}{8(2-a_{k}-\epsilon/8)}\omega(m),
\) so that
\(
\epsilon\omega(m)/2-\omega_{0}(m)(2-a_{k})=(\omega(m)-\omega_{0}(m))\epsilon/8.
\)
Then we have
\[
\mathbb{P}(\pi>\omega(m)) \leq \mathbb{P}(\sum_{i=\omega_{0}}^{\omega(m)} \eta_{i}^{'}>(a_{k}+\epsilon/8)(\omega(m)-\omega_{0}(m)))\leq C_{1}e^{-C_{2}\omega(m)},
\]
where $C_{1}, C_{2}$ are some positive constants (which still depend on $k$ and $\epsilon$).

Now we estimate the probability that $F_{j}(k)$ reaches the line $a_{k}j+\epsilon j$ when started from time $\pi>\omega_{0}$.  From monotonicity, we may assume that \(F_\pi(k) =\lfloor a_k \pi + \epsilon \pi/2\rfloor.\) Let $M_a(j)$ denote the random walk with increments distributed as $\eta_1'$, started from level $a$nand stopped when the process crosses the line $a_kj.$ 

Define the following function
\[
p(m,r_1,r_2) = \sup_{t \geq \omega_0(m)}
\Pr\left[
\exists~j \geq t
~:~
M_{\lfloor a_k t + r_1 t\rfloor}(j-t) \geq \lfloor(a_k + r_2) j \rfloor
\right].
\]
We claim that for all fixed \( \epsilon/4 < r_1 < r_2 \), we have \( p(m,r_1,r_2) \to 0\).  This follows from a simple tail bound estimate, and we will delay the proof until the end. 

Let $\rho_1$ be the first time after $\pi$ that the process drops below the line $a_kj$ and returns to level greater than $\lfloor a_{k}j+\epsilon j/2 \rfloor-k$ without crossing $a_{k}j+\epsilon j.$  
Likewise, let $\rho_{i} \geq \rho_{i-1}$ be the $i^{th}$ time that this happens.  Given that $\rho_{i} < \infty,$ for $\rho_{i}$ to occur, it must be that the process crosses from level 
\(
\lfloor a_{k}j+\epsilon j/3 \rfloor 
\)
to level
\(
\lfloor a_{k}j+3\epsilon j/8 \rfloor, 
\)
provided $m$ is sufficiently large,
and hence 
\begin{equation*}
\Pr \left[
\rho_i < \infty ~|~ \rho_{i-1} < \infty
\right] \leq p(m,\epsilon/3,3\epsilon/8).
\end{equation*}
We now decompose the probability of $F_j(k)$ exceeding \(a_k j + \epsilon j\) according to the renewal times \(\rho_j\).
\begin{align*}
\Pr\left[
\exists~j \geq \pi~:~
F_j(k) > a_k j + \epsilon j
\right]
\hspace{-1.5in}& \\
&\leq 
\sum_{i=0}^\infty 
\Pr\left[
\exists~j, \rho_{i+1} \geq j \geq \rho_i~:~
F_j(k) > a_k j + \epsilon j 
~
\middle\vert
~
\rho_i < \infty
\right]
\Pr\left[
\rho_i < \infty
\right] \\
&\leq 
\sum_{i=0}^\infty 
p(m,\epsilon/2,\epsilon)
p(m,\epsilon/3,3\epsilon/8)^i
=o(1).
\end{align*}

It remains to show that for all fixed \( \epsilon/4 < r_1 < r_2 \), we have \( p(m,r_1,r_2) \to 0\).
Set $S_j = M_a(j) - \Exp M_a(j).$  The event that 
\[
\mathcal{E}
=
\{
\exists~j \geq t~:~
M_{\lfloor a_k t + r_1 t\rfloor}(j-t) \geq \lfloor(a_k + r_2) j \rfloor
\}
\]
has
\[
\mathcal{E}
\subseteq
\{
\exists~n \geq 0~:~
S_{n} \geq (r_2 - \epsilon/4)n + (r_2-r_1)t - 1
\}.
\]
From Hoeffding's inequality, we have that for fixed $n,$ there is a constant $c=c(k,\epsilon) > 0$ so that
\[
\Pr \left[
S_n \geq t
\right] \leq \exp(-c t^2/n).
\]
Summing over all $n,$ we get that 
\begin{align*}
\Pr \left[
\exists~n \geq 0~:~
S_{n} \geq (r_2 - \epsilon/4)n + (r_2-r_1)t - 1
\right]
\hspace{-2in}& \\ 
&\leq \sum_{n=0}^\infty
\exp(-c( (r_2 - \epsilon/4)n + (r_2 - r_1)t)^2/n)  \\
&\leq 
\exp(-2c(r_2 - \epsilon/4)(r_2 - r_1)t) 
\sum_{n=1}^\infty
\exp(-c(r_2 - \epsilon/4)n) \\
&\leq 
\frac{\exp(-2c(r_2 - \epsilon/4)(r_2 - r_1)t)}
{1-\exp(-c(r_2 - \epsilon/4)}.
\end{align*}
This goes to $0$ uniformly in $t \geq w_0(m),$ and hence the proof is complete.

\end{proof}
\noindent Now, let $k_{0}=10.$ Let $f(k_0) = \frac{1}{100}$ and inductively define
\(
f(k+1)=f(k)^2(k+1)
\)
for $k \geq k_0.$ 
\begin{lemma}
\label{lem:f_estimate}
There are constants $c_1 > 0$ and $c_2 > 0$ so that 
for all $j \geq 0,$
\[
\exp( -c_1 2^j ) \leq 
f(k_0 + j) \leq \exp( -c_2 2^j ).
\]
\end{lemma}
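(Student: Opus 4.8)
The plan is to pass to logarithms, where the recursion becomes linear and can be solved in closed form. Since $f(k_0)=1/100>0$ and $f(k+1)=f(k)^2(k+1)$, an immediate induction gives $f(k)>0$ for all $k\ge k_0$, so we may define $g_j = -\log f(k_0+j)$. The recursion $f(k+1)=f(k)^2(k+1)$ becomes
\[
g_{j+1} = 2g_j - \log(k_0 + j + 1),\qquad g_0 = \log 100 .
\]
Dividing by $2^{j+1}$ and telescoping yields the closed form
\[
g_j = 2^j\left(\log 100 - \sum_{\ell=1}^{j}\frac{\log(k_0+\ell)}{2^\ell}\right).
\]
Because $f=\exp(-g)$, the assertion of the lemma is exactly the two-sided bound $c_2 \le g_j/2^j \le c_1$ for all $j\ge 0$, so it suffices to bound the bracketed quantity above and below by positive constants.

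The upper bound on $g_j/2^j$ is trivial: every term of the sum is nonnegative, so $g_j/2^j \le \log 100$, and we may take $c_1 = \log 100$; this gives $f(k_0+j)\ge \exp(-c_1 2^j)$. For the lower bound we estimate the tail of the (increasing) partial sums by the full series:
\[
\frac{g_j}{2^j} \ge \log 100 - \sum_{\ell=1}^{\infty}\frac{\log(k_0+\ell)}{2^\ell}.
\]
Using the crude convexity estimate $\log(10+\ell)\le \log 10 + \ell/10$ together with $\sum_{\ell\ge 1}\ell 2^{-\ell}=2$, the series is at most $\log 10 + 1/5$, which is strictly smaller than $\log 100 = 2\log 10$. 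Hence the right-hand side is at least $c_2 := \log 10 - 1/5 > 0$, giving $f(k_0+j)\le \exp(-c_2 2^j)$ for every $j\ge 0$ (the case $j=0$ can also be checked directly against $f(k_0)=1/100$).

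The only step that uses anything specific is the numerical inequality $\sum_{\ell\ge 1}2^{-\ell}\log(10+\ell) < \log 100$, which is precisely where the choices $k_0=10$ and $f(k_0)=1/100$ enter; any rough upper bound on $\log$ against a linear function makes this immediate, so there is no real obstacle. (If one preferred a self-contained inductive argument one could instead show directly that $g_j/2^j$ lies in $[c_2,c_1]$ by induction on $j$, but the closed form above makes the telescoped estimate cleaner.)
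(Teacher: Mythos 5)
Your proof is correct and follows essentially the same route as the paper: both solve the recursion in logarithmic form to get the closed expression $\log f(k_0+j) = 2^j\bigl(\log f(k_0) + \sum_{\ell=1}^{j}2^{-\ell}\log(k_0+\ell)\bigr)$, obtain the lower bound on $f$ from positivity of the summands, and the upper bound by dominating $\log(k_0+\ell)$ by a linear function so the series converges to a constant strictly below $\log 100$. The only difference is cosmetic: you use $\log(10+\ell)\le\log 10+\ell/10$ where the paper uses $\log(k_0+\ell)\le\log k_0+\ell$, which merely yields a slightly better value of $c_2$.
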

\begin{proof}

It is easily verified by induction that $f(k)$ can be expressed using the following rule for $k > k_0,$
\begin{equation}
\label{eq:f_exact}
\log f(k) = 2^{k-k_0}\sum_{i=1}^{k-k_0} 2^{-i} \log(k_0 + i) + 2^{k-k_0}\log f(k_0).
\end{equation}
Thus, from the positivity of the $\log(k_0 + i)$ term, it follows immediately that 
\[
\log f(k) > 2^{k-k_0}\log f(k_0),
\]
so that the lower bound holds with $c_1 = -\log f(k_0).$ For the upper bound, we note that $\log(k_0 + i) \leq \log(k_0) + i$ and hence
\[
\sum_{i=1}^{k-k_0} 2^{-i} \log(k_0 + i)
\leq
\sum_{i=1}^{\infty} 2^{-i} (\log(k_0) + i)
=\log(k_0) + 2.
\]
Thus from~\eqref{eq:f_exact}, we have that 
\[
\log f(k) \leq 2^{k-k_0}\left( \log(k_0) + 2 + \log(f(k_0)) \right),
\]
As we have $e^2k_0f(k_0) < 1,$ we may take $c_2 = -\log (e^2k_0f(k_0))$ to complete the proof.

\end{proof}

Now set $\rho(m) = \lceil (\log \log m)^{1/3} \rceil$ and define $\phi(m,k)$ to be $\rho(m) C^{2^{k+1}}$ where $C$ is an integer sufficiently large that
\begin{equation}
\label{eq:Cdef}
\log C > c_1 \vee ( \log 4 + c_12^{-k_0}).
\end{equation}
 Let $k_{*} = k_{*}(m)$ be the smallest integer so that
\[
C^{2^{k_{*}+1}} \geq m^{1/2}.
\]
Note that this makes
\(
k_* = \frac{\log\log m}{\log 2} + \Theta(1).
\)

\begin{lemma}
\label{lem:diagonal_recurrence}
With high probability,
for all $k_0 \leq k \leq k_{*}$ and for all $j$ with $m \geq j \geq \phi(m,k),$
\[
\frac{F_j(k)}{2j} \leq f(k).
\]
\end{lemma}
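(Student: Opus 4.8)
The plan is to prove Lemma~\ref{lem:diagonal_recurrence} by induction on $k$, starting from the base case $k = k_0 = 10$, which is supplied by Lemma~\ref{lem:proc_start}: since $\alpha_{10} \le 1/300 < 2 f(k_0) = 1/50$, we know $F_j(k_0)/(2j) \le f(k_0)$ with high probability for all $j$ in the range $\omega(m) \le j \le m$, and in particular for $j \ge \phi(m,k_0)$ once $m$ is large. For the inductive step, suppose that with high probability $F_j(k-1)/(2j) \le f(k-1)$ for all $j$ with $\phi(m,k-1) \le j \le m$. I want to upgrade this to the bound $F_j(k)/(2j) \le f(k) = f(k-1)^2 k$ on the later range $\phi(m,k) \le j \le m$. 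The mechanism, as the introduction foreshadows, is that good control on $F_j(k-1)$ makes the "up by $k$" transition in~\eqref{eq:evolution} occur with probability at most $(f(k-1))^2$ per step, so that $F_j(k)$ is dominated by a process that drifts toward roughly $2j f(k-1)^2 \cdot k = 2j f(k)$; the extra factor $C^{2^{k+1}}$ built into $\phi$ provides the long burn-in time needed for this averaging to win.

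The concrete steps: First, fix $j$ in the target range and work on the event $\mathcal{A} = \{F_i(k-1) \le 2i f(k-1) \text{ for all } \phi(m,k-1) \le i \le m\}$, which holds with high probability by induction. On $\mathcal{A}$, couple the increments of $F_i(k)$ for $i \ge \phi(m,k-1)$ with i.i.d.\ variables $\eta_i'$ taking value $1$ with probability $(f(k-1))^2$ (an upper bound on $(F_{i-1}(k-1)/2(i-1))^2$ absorbing the small $F_i(k)$ subtraction harmlessly by monotonicity) and value $k$ with the complementary small probability, so that $\mathbb{E}\eta_i'$ is slightly more than $k (f(k-1))^2 = f(k)$, say at most $f(k)(1 + 1/10)$ after choosing constants. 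Second, start the dominating random walk at time $t_0 := \phi(m,k-1) = \rho(m) C^{2^k}$ from its worst-case value $F_{t_0}(k) \le 2 t_0 f(k_0) \le 2 t_0$ (crude, but $t_0$ is tiny compared to $\phi(m,k) = t_0 C^{2^k}$), and run it up to time $j \ge \phi(m,k)$. Then $F_j(k) \le 2t_0 + \sum_{i=t_0}^j \eta_i'$, whose mean is at most $2t_0 + (j - t_0) f(k) (1 + 1/10) \le 2j f(k)$ provided $t_0/j$ is small enough — which is exactly guaranteed by $j \ge \phi(m,k) = t_0 C^{2^k}$ together with the defining inequality~\eqref{eq:Cdef} on $C$, since $C^{2^k}$ dominates any fixed polynomial factor and in particular beats $c_1 2^{k}$-type terms. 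Third, to get the bound simultaneously for all $j$ in the range and to control the fluctuations, apply a Hoeffding/Azuma tail bound to $\sum_{i=t_0}^j (\eta_i' - \mathbb{E}\eta_i')$: the deviation needed is of order $j f(k)$, and since $f(k) \ge \exp(-c_1 2^{k-k_0})$ by Lemma~\ref{lem:f_estimate} while the number of steps is at least $\phi(m,k) \ge \rho(m) C^{2^{k+1}}$, the exponent in the tail bound is at least $c\, (j f(k))^2 / j = c j f(k)^2 \ge c\, \rho(m)\, C^{2^{k+1}} e^{-2c_1 2^{k-k_0}}$, and~\eqref{eq:Cdef} ($\log C > \log 4 + c_1 2^{-k_0}$) is precisely what makes $C^{2^{k+1}} e^{-2 c_1 2^{k-k_0}} \ge 4^{2^k}$ grow fast enough that summing over all $j \le m$ and then over all $k_0 \le k \le k_*$ still yields $o(1)$, using $\rho(m) = \lceil (\log\log m)^{1/3} \rceil \to \infty$.

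The main obstacle is the bookkeeping of the burn-in: I must check that the two competing quantities — the additive error $2t_0$ from the cold start at time $\phi(m,k-1)$, and the probabilistic fluctuation of the dominating walk — are both swamped by the gap between $\phi(m,k-1)$ and $\phi(m,k)$, and that this holds \emph{uniformly} as $k$ ranges up to $k_*(m) \approx \log\log m / \log 2$, where $f(k)$ has become as small as $m^{-\Theta(1)}$ and the walk has only $\phi(m,k) \approx m^{1/2}$ steps to average over. This is why $\phi$ is defined with the doubly-exponential factor $C^{2^{k+1}}$ and why the constant $C$ is pinned down by the two-part inequality~\eqref{eq:Cdef}: the first clause $\log C > c_1$ controls the deterministic start, and the second clause $\log C > \log 4 + c_1 2^{-k_0}$ controls the union bound over $j$ and $k$. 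Once these inequalities are in hand the argument is a routine coupling-plus-Hoeffding estimate analogous to the proof of Lemma~\ref{lem:proc_start}, with the union over $k$ costing only a factor $k_* = O(\log\log m)$, which is absorbed by the stretched exponential savings.
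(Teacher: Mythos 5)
Your proposal is correct and follows essentially the same route as the paper: a layered induction from the base case $k=k_0$ (supplied by Lemma~\ref{lem:proc_start}), stochastic domination of the increments of $F_\cdot(k)$ on the previous-level event by i.i.d.\ variables of mean $\approx (k+1)f(k-1)^2$, absorption of the crude starting value $F_{\phi(m,k-1)}(k)\le 2\phi(m,k-1)$ by the burn-in ratio $C^{-2^{k}}$, and Chernoff/Hoeffding tail bounds summed geometrically over $j$ and then union-bounded over $k\le k_*$ using $\rho(m)\to\infty$. The only differences are cosmetic (your $1.1f(k)$ slack versus the paper's $\tfrac32\Exp X_{j,k}$ event, and a dropped factor $k^2$ in the Hoeffding exponent that is harmless against the doubly exponential gain), so the argument matches the paper's proof.
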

\begin{proof}
The case $k=k_0$ follows from Lemma~\ref{lem:proc_start} with $\omega(m) = \phi(m,k_0).$
We now show how the proof follows by layered induction.  Let $\G_k$ be the event
\[
\G(k) = \{
F_j(k) \leq 2jf(k)\;,\;\forall\; j\; :\; m \geq j \geq \phi(m,k)
\}.
\]
For any $j \geq \phi(m,k+1),$
\[
\frac{F_{j}(k+1)}{2j} = 
\frac{F_{\phi(m,k)}(k+1)}{2j} 
+ \frac{1}{2j} \sum_{i=\phi(m,k)}^{j}\xi_{i}(k+1),
\]
where $\xi_{i}(k+1)=F_{i+1}(k+1) - F_{i}(k+1)$ follows the rule in~\eqref{eq:evolution}. 
Let $X_{j,k}$ be distributed as
\[
X_{j,k} \sim (k+1)\Binom(j-\phi(m,k),f(k)^2).
\]
Conditional on $\G(k),$ the sum $\sum_{i=\phi(m,k)}^{j}\xi_{i}(k+1)$ is stochastically dominated by $X_{j,k}.$

Consider the event 
\[
\mathcal{E}(k+1)=\{\exists j\geq \phi(m,k+1): \sum_{i=\phi(m,k)}^{j}\xi_{i}(k+1) > 3/2\Exp X_{j,k}\}.
\]
On complement of $\mathcal{E}(k+1)$ we obtain, setting $l = j - \phi(m,k+1),$
\begin{align*}
\frac{F_{j}(k+1)}{2j}
&=\frac{F_{\phi(m,k)}(k+1)}{2\phi(m,k+1)+l} + \frac{1}{2(\phi(m,k+1)+l)} \sum_{i=\phi(m,k)}^{\phi(m,k+1)+l}\xi_{i}(k+1) \\
&\leq \frac{\phi(m,k)}{\phi(m,k+1)+l}+ \frac{3(k+1)(\phi(m,k+1)+l-\phi(m,k))f^{2}(k)}{4(\phi(m,k+1)+l)} \\
&\leq \frac{\phi(m,k)}{\phi(m,k+1)}+\frac{3}{4}(k+1)f^{2}(k)\\ 
&\leq C^{2^{k+1}-2^{k+2}}+\frac{3}{4}f(k+1)\\
&\leq \frac{1}{4}e^{-c_{1} 2^{k+1-k_{0}}}+\frac{3}{4}f(k+1) \\
&\leq f(k+1),
\end{align*}
where we have applied~\eqref{eq:Cdef} in the fifth line.
Hence we obtain that $\mathcal{E}(k+1)^c \subseteq \G(k+1),$ and thus we may bound
\begin{align*}
\Pr\left[\; \exists~k~, k_{*} \geq k > k_0~:~ \G(k)~\text{fails}
\right]
&\leq
\sum_{k=k_0}^{k_*-1} \Pr\left[
\G(k+1)^c \cap \G(k)
\right] \\
&\leq
\sum_{k=k_0}^{k_*-1} \Pr\left[
\mathcal{E}(k+1) ~\vert~ \G(k)
\right]. 
\end{align*}
We estimate the probability of this event conditional on $\G(k)$ using standard Chernoff bounds.  In the following $c>0$ is an absolute constant.
\begin{align*}
\Pr \left[ \mathcal{E}(k+1) \vert \G_k \right] 
&\leq \Pr\left[~\exists~j \geq \phi(m,k+1)~:~X_{j,k} > \tfrac32\Exp X_{j,k} \right] \\
&\leq \sum_{l=0}^{m}e^{-c(\phi(m,k+1)-\phi(m,k))f^{2}(k)-clf^{2}(k)} \\
&= e^{-c(\phi(m,k+1)-\phi(m,k))f^{2}(k)}\sum_{l=0}^{m}e^{-clf^{2}(k)} \\
&\leq \frac{1}{1-e^{-cf^{2}(k)}}e^{-c(\phi(m,k+1)-\phi(m,k))f^{2}(k)} \\
\intertext{ Here we use that $f(k) \leq f(k_0)$ and hence there is an absolute constant $C'$ so that $C'(1-e^{-cf^{2}(k)}) \geq f^{2}(k).$  Applying Lemma~\ref{lem:f_estimate},}
\Pr \left[ \mathcal{E}(k+1) \vert \G_k \right] 
&\leq C'\frac{e^{-\rho(m)(C^{2^{k+2}}-C^{2^{k+1}})\exp( -c_1 2^{k-k_{0}})}}{\exp( -c_1 2^{k-k_{0}} )} \\
&\leq C'\exp({-\rho(m)(Ce^{-c_1})^{2^{k}}+c_1 2^{k-k_{0}}}).
\end{align*}
Therefore we may conclude that
\begin{multline*}
\Pr\left[\; \exists~k~, k_{*} \geq k > k_0~:~ \G(k)~\text{fails}
\right]
\leq
\sum_{k=k_{0}}^{k_{*}}C'\exp({-\rho(m)(Ce^{-c_1})^{2^{k}}+c_1 2^{k-k_{0}}}).
\end{multline*}
It can be checked that for $m$ sufficiently large, this bound is monotone decreasing in $k,$ and hence we have that 
\begin{equation*}
\Pr\left[\; \exists~k~, k_{*} \geq k > k_0~:~ \G(k)~\text{fails}
\right]
\leq
k_{*}\exp({-A\rho(m)})
\end{equation*}
for some absolute constant $A.$  As $k_* = O(\log \log m),$ this tends to $0$ with $m,$ which completes the proof of Lemma~\ref{lem:diagonal_recurrence}. 

\end{proof}

As a consequence, we have that for $m \geq j \geq \phi(m,k)$, and all $k_{0}\leq k\leq k_{*}$
\[
F_j(k_{*}) \leq F_j(k) 
\leq 2j f(k)
\leq 2j \exp(-c_2 2^{k-k_0})
\]
with high probability. We can therefore find some constant $\beta>0$ so that 
\[
\frac{F_j(k_{*})}{2j}\leq \left(
\frac{\rho(m)}{\phi(m,k+1)}
\right)^{\beta} 
\]
for $m \geq j \geq \phi(m,k)$, and all $k_{0}\leq k\leq k_{*}$. For each $\log \log m  \leq j\leq m$ we could find $k$ such that $\phi(m,k+1) \geq j \geq \phi(m,k),$ which implies that there is a $\beta_0 > 0$ constant so that with high probability
\begin{equation}
\label{eq:decay}
\frac{F_j(k_{*})}{2j} \leq \frac{1}{j^{\beta_0}}
\end{equation}
for all $\log \log m / \log 2 \leq j \leq m.$  

A large enough value of $\beta_0$ would complete the proof.  If $\beta_0 > \tfrac12,$ then with high probability, $F_j(k_*+1)$ would be identically $0$ for all $j$ with high probability.  However, by the construction so far, it turns out $\beta_0$ must be strictly less than $\tfrac12.$  That said, it is possible to supercharge this result by letting the recurrence run a little farther.
\begin{lemma}
\label{lem:supercharger}
If there is an absolute constant $M_1 > 0$ so that with high probability for some $k \leq \log\log m,$
\[
F_j(k) \leq 2 j^{1-\beta},~~\forall~j:~m \geq j \geq (\frm)^{M_1},
\]
for some $\beta < \tfrac12,$
then there is an absolute constant $M_2 > 0$ so that with high probability
\[
F_j(k+1) \leq 2 j^{1-1.5\beta},~~\forall~j:~m \geq j \geq (\frm)^{M_2}.
\]
\end{lemma}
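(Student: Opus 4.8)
The plan is to follow the proof of Lemma~\ref{lem:diagonal_recurrence} almost verbatim, the only substantive change being that the per-step ``failure probability'' is now the slowly-decaying quantity $i^{-2\beta}$ rather than a fixed constant. Set $t_0 = \lceil (\frm)^{M_1}\rceil$ and work throughout on the event $\mathcal{G} = \{F_i(k) \le 2 i^{1-\beta} \text{ for all } i,\ m \ge i \ge t_0\}$, which holds with high probability by hypothesis. By~\eqref{eq:evolution}, for $i \ge t_0$ the increment $\xi_i(k+1) = F_{i+1}(k+1) - F_i(k+1)$ takes values in $\{0,1,k+1\}$ and is nonzero with probability exactly $(F_i(k)/(2i))^2$, which is at most $i^{-2\beta}$ on $\mathcal{G}$. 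Hence, by the same stochastic-domination coupling used to pass from $\sum_i \xi_i(k+1)$ to $X_{j,k}$ in Lemma~\ref{lem:diagonal_recurrence} --- now producing a sum of independent Bernoullis with unequal parameters in place of a binomial --- conditionally on $\mathcal{G}$ we have, for $j \ge t_0$,
\[
F_j(k+1) \;\le\; F_{t_0}(k+1) + (k+1)\sum_{i=t_0}^{j-1} B_i, \qquad B_i \sim \Bernoulli(i^{-2\beta}) \text{ independent}.
\]
Here $F_{t_0}(k+1) \le F_{t_0}(k) \le 2 t_0^{1-\beta}$ on $\mathcal{G}$ by monotonicity in the second argument, and since $2\beta < 1$ we have $\Exp \sum_{i=t_0}^{j-1} B_i \le j^{1-2\beta}/(1-2\beta)$.

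I would then fix $M_2$ to be an absolute constant, depending only on the given $\beta$ and on $M_1$, large enough that for all large $m$ and all $j \ge (\frm)^{M_2}$: (i) $2 t_0^{1-\beta} \le \tfrac12 j^{1-1.5\beta}$, for which $M_2 \ge 2 M_1$ suffices because $\tfrac{1-\beta}{1-1.5\beta} \le 2$ when $\beta < \tfrac12$ (and $1-1.5\beta \ge \tfrac14 > 0$); and (ii) $j^{\beta/2} \ge C(k+1)$ for a suitable absolute constant $C$, for which an $M_2$ of order $1/\beta$ suffices because $k+1 \le \frm$. Granting (i), on $\mathcal{G}$ the bound $F_j(k+1) \le 2 j^{1-1.5\beta}$ can fail at some $j \ge (\frm)^{M_2}$ only if $\sum_{i=t_0}^{j-1} B_i > \tfrac{3}{2}\, j^{1-1.5\beta}/(k+1)$ there; by (ii) this threshold is at least twice $\Exp\sum_{i=t_0}^{j-1}B_i$, so the multiplicative Chernoff bound gives, for each such $j$, $\Pr\big[\sum_{i=t_0}^{j-1} B_i > \tfrac{3}{2}\, j^{1-1.5\beta}/(k+1)\big] \le \exp(-c\, j^{1-1.5\beta}/(k+1))$ for an absolute $c>0$.

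It remains to union-bound over $t_0 \le j \le m$. This is the one place the argument differs from Lemma~\ref{lem:diagonal_recurrence}: the summands are no longer geometric in $j$, so rather than summing a geometric series I would bound $\sum_{j \ge (\frm)^{M_2}} \exp(-c\, j^{1-1.5\beta}/(k+1))$ by comparison with an integral (equivalently, via a dyadic split of the range of $j$), obtaining at most a fixed power of $\frm$ times $\exp\big(-c(\frm)^{M_2(1-1.5\beta)-1}\big)$ after using $k+1 \le \frm$; choosing $M_2$ also large enough that $M_2(1-1.5\beta) > 1$ makes this tend to $0$. Together with $\Pr[\mathcal{G}^c] \to 0$ this completes the proof.

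The main obstacle is nothing conceptual but rather the bookkeeping among three competing scales --- the residual term $t_0^{1-\beta} = (\frm)^{\Theta(1)}$, the mean drift $j^{1-2\beta}$, and the target $j^{1-1.5\beta}$ --- and in particular checking that the factor $1.5$ really does open a gap (i.e. that $1-1.5\beta$ stays bounded away from $0$ for $\beta < \tfrac12$), and that a single absolute choice of $M_2$ (as a function of $\beta$ and $M_1$) simultaneously makes $(\frm)^{M_2}$ large enough to swamp the residual term, to enter the large-deviation regime needed for the clean Chernoff tail, and to keep the tail summable over $j$ all the way to $m$. Everything else is routine.
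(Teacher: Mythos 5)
Your proposal is correct and follows essentially the same route as the paper: on the hypothesis event the nonzero increments of $F_\cdot(k+1)$ occur with probability at most $i^{-2\beta}$, so the count of such increments is dominated by independent Bernoulli/Poisson variables with mean of order $j^{1-2\beta}$, a Chernoff tail plus a union bound over $j$ controls the deviation, and $M_2$ is chosen large enough (using $k+1\leq \frm$ and $1-2\beta>0$) to absorb both the residual term $F_{j_0}(k+1)$ and the $(k+1)$ factor into $j^{1-1.5\beta}$. The only differences (a direct multiplicative Chernoff bound for the inhomogeneous Bernoulli sum instead of the paper's Poisson domination, and bounding $F_{t_0}(k+1)\leq 2t_0^{1-\beta}$ by monotonicity rather than trivially by $2j_0$) are cosmetic.
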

\begin{proof}
We let $\C$ be the event used as the hypothesis of the lemma.  Set $j_0 = (\frm)^{M_1}.$  Then for $j \geq j_0,$ we have that 
\[
\Delta_j =
F_j(k+1) - F_{j_0}(k+1)
\]
conditional on $\C$
is dominated by a sum of independent Bernoulli variables with means at most $j_0^{-2\beta}.$  Thus, we may find an absolute constant $c > 0$ so that $\Delta_j$ is stochastically dominated by Poisson variable $X_j$ with mean
\[
\Exp X_j = c\sum_{i=j_0}^j i^{-2\beta}
\leq \frac{c}{1-2\beta} j^{1-2\beta},
\]
with the inequality following by comparison with a Riemann sum.  From standard tail bounds for Poisson variables, we may find a constant $C'$ so that 
\[
\Pr \left[
\exists~j \geq j_0~:~ \Delta_j \geq C'\Exp X_j
\right] \leq 
C'\sum_{j=j_0}^\infty \exp(-j^{1-2\beta}/C'),
\] 
which is $o(1)$ using the hypothesis that $1-2\beta > 0.$
Thus it follows that with high probability
\begin{equation}
\notag
F_j(k+1)\leq F_{j_0}(k+1) + (k+1) C'j^{1-2\beta}.
\end{equation}
As $k \leq \log\log m,$ we have that $(k+1) \leq \frm$ for large enough $m.$  Choose $M_2$ sufficiently large that both of $j_0(\frm) \leq (\frm)^{M_2(1-1.5\beta)}$ and $C'(\frm) \leq (\frm)^{0.5M_2\beta}$ for all $m$ sufficiently large.  Then we conclude for all $j \geq (\frm)^{M_2},$
\begin{equation}
\notag
F_j(k+1)\leq j^{1-1.5\beta} + j^{0.5\beta}j^{1-2\beta} = 2j^{1-1.5\beta},
\end{equation}
as desired.

\end{proof}

\begin{lemma}
\label{lem:final_recurrence}
There is a $M=M(\beta_0) > 0$ and an integer $r=r(\beta_0) > 0$ so that setting 
$j_0 = (2\log\log m)^M,$ then with high probability
\[
F_j(k_{*} + r) = F_{j_0}(k_{*} + r)
\]
for all $m\geq j \geq j_0.$
\end{lemma}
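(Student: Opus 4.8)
The plan is to boost the polynomial decay \eqref{eq:decay} up to a rate exceeding $\tfrac12$ by iterating Lemma~\ref{lem:supercharger} a bounded number of times, and then to show that the next level of $F$ is ``frozen'' after time $j_0$ via a first--moment (Borel--Cantelli type) estimate, exploiting that a decay rate above $\tfrac12$ makes the relevant jump probabilities summable.

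\emph{Amplification.} First I would rewrite \eqref{eq:decay} as the statement that, with high probability, $F_j(k_*) \le 2j^{1-\beta_0}$ for all $j$ with $m \ge j \ge (\frm)^{M_1}$ and $M_1 = 1$, which is legitimate because $\frm = 2\log\log m \ge \log\log m/\log 2$; here $\beta_0 \in (0,\tfrac12)$ is the absolute constant of \eqref{eq:decay}. Setting $\beta_\ell := 1.5^{\ell}\beta_0$, let $r=r(\beta_0)$ be the least integer with $\beta_{r-1} > \tfrac12$ (we may shrink $\beta_0$ slightly at the outset so that no $\beta_\ell$ equals $\tfrac12$ exactly); note $\beta_{r-1} \le \tfrac34$ since $1.5\cdot\tfrac12<1$, so no single step overshoots past the admissible range, and $\beta_0,\dots,\beta_{r-2} < \tfrac12$. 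Applying Lemma~\ref{lem:supercharger} successively $r-1$ times is then legitimate: at the $\ell$-th stage its hypothesis holds with exponent $\beta_{\ell-1} < \tfrac12$ at level $k_*+\ell-1$, which remains $O(\log\log m)$, and each stage changes the threshold exponent by an absolute amount. The upshot is an absolute constant $M = M(\beta_0)$ and, writing $\beta := \beta_{r-1} > \tfrac12$, the high--probability event
\[
G := \bigl\{ F_j(k_*+r-1) \le 2 j^{1-\beta} \ \text{ for all } j \text{ with } m \ge j \ge (\frm)^M =: j_0 \bigr\}.
\]

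\emph{Freezing.} By \eqref{eq:evolution} applied with $k = k_*+r$, for each $i \ge j_0$ the conditional probability given $\filt_i$ (the natural filtration of the process) that $F_{i+1}(k_*+r) \ne F_i(k_*+r)$ equals $\bigl(F_i(k_*+r-1)/(2i)\bigr)^2$. Let $G_i \in \filt_i$ be the event that $F_j(k_*+r-1) \le 2j^{1-\beta}$ for all $j$ with $j_0 \le j \le i$; then $G \subseteq G_i$, and on $G_i$ that conditional probability is at most $i^{-2\beta}$. Writing $Z_i := \one[F_{i+1}(k_*+r) \ne F_i(k_*+r)]$, we get $\Exp[\one_G Z_i] \le \Exp[\one_{G_i}\Exp[Z_i \mid \filt_i]] \le i^{-2\beta}$, so a union bound yields
\[
\Pr\bigl[ \exists\, j,\ m \ge j \ge j_0 :\ F_j(k_*+r) \ne F_{j_0}(k_*+r) \bigr]
\le \Pr[G^c] + \sum_{i=j_0}^{\infty} i^{-2\beta}
\le \Pr[G^c] + \frac{C\, j_0^{\,1-2\beta}}{2\beta-1}
\]
for an absolute constant $C$ (the tail sum being estimated by comparison with $\int x^{-2\beta}\,dx$). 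Since $\Pr[G^c] \to 0$, $2\beta>1$, and $j_0 = (\frm)^M \to \infty$, the right-hand side tends to $0$, which is exactly the assertion of the lemma with this $M$ and $r$.

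The step I expect to be the main obstacle is the conditioning in the freezing argument: the decay bound $F_i(k_*+r-1) \le 2i^{1-\beta}$ holds only on the high--probability event $G$ and not pathwise, so one cannot condition on $G$ directly and must instead intersect with the $\filt_i$--adapted truncations $G_i$ before taking conditional expectations, then use $\one_G \le \one_{G_i}$. The rest is bookkeeping: verifying that the finitely many applications of Lemma~\ref{lem:supercharger} keep the working level at $k_*+O(1)=O(\log\log m)$ and the time threshold of the form $(\frm)^{O(1)}$, and evaluating the convergent tail sum.
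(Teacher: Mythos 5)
Your proposal is correct and follows essentially the same route as the paper: iterate Lemma~\ref{lem:supercharger} finitely many times starting from \eqref{eq:decay} until the decay exponent exceeds $\tfrac12$, then freeze the next level by a first--moment bound using the summability of $i^{-2\beta}$ beyond $j_0$. Your treatment of the conditioning via the adapted events $G_i$ is simply a more careful rendering of what the paper dismisses as ``the usual recurrence argument,'' not a different method.
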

\begin{proof}
We may apply Lemma~\ref{lem:supercharger} some $r'(\beta_0)$ many times to conclude that there is an $M=M(\beta_0)$ so that with high probability
\(
{F_j(k_{*}+r')} \leq 2j^{1-\beta}
\)
for all $j \geq j_0$ and for some $\beta > \tfrac 12.$   

Let $\C$ be the event
\[
\C = \{
{F_j(k_{*}+r')} \leq 2j^{1-\beta},~\forall~j \geq j_0
\}.
\]
It now follows from the usual recurrence argument that
\[
\Pr \left[
\exists~j;~j_{0} \leq j\leq m~:~
F_j(k_{*}+r'+1) > F_{j_0}(k_*+r'+1) \vert~\C
\right] =O\biggl(\sum_{i=j_0}^m i^{-2\beta}\biggr) = o(1),
\]
as $i^{-2\beta}$ is summable.  Thus taking $r=r'+1,$ we have shown the desired claim.
\end{proof}

We now prove the final theorem.
\begin{proof}[Proof of Theorem~\ref{thm:max_degree}]
From Lemma~\ref{lem:final_recurrence}, it follows that with high probability, \[
F_m(k_* + r) = F_{j_0}(k_* + r).
\]
As $F_{j_0}(k_* + r)$ is almost surely at most $O( (\log\log m)^{M+1}),$ it follows that the maximum degree of the $\pam$ graph after $m$ steps is $ (\log\log m)^{M+1}$ with high probability.  Note that $k_* + r = \Theta( j_0^{1/M_1} ),$ and hence with high probability, $\pam[j_0]$ has no vertices of degree $k_* + r.$  Thus in fact, it follows that with high probability $F_{j_0}(k_* + r) = 0,$ so that with high probability $F_{m}(k_* + r) = 0$ and the maximum degree of the graph is at most $k_* + r = \log\log m / \log 2 + \Theta(1).$  


We will now prove the lower bound. To do so we provide a coupling between the bin and ball model with two choices and our model. We will use Theorem 6 of~\cite{MRS01} for the lower bound estimate on the maximum degree. Let us recall the ball and bin model. Suppose that $n$ balls are sequentially placed into $n$ bins (denote them by $v_{1}$,...,$v_{n}$). Each ball is placed in the least full bin at the time of the placement, among 2 bins, chosen independently and uniformly at random. Theorem 6 of~\cite{MRS01} provides that in this case after all the balls are placed the number of balls in the fullest bin is at least $\log\log n/ \log 2 - \Theta(1)$ with high probability. With a slight change in the proof of this theorem it could be extended to $n$ bins and $\epsilon n$ balls with the same statement, where $0<\epsilon<1$ is some constant. From here we consider the model with $2m$ bins and $m$ balls and we will use extension of Theorem 6 of~\cite{MRS01} for $n=2m$ and $\epsilon=1/2$.

Let $N_{j}^{0}(k)$ be the number of bins that contain at least $k$ balls at time $j$. We will need the following lemma.
\begin{lemma}
\label{coupling_bab}
There is a coupling such that for all $k\geq 1$ and $1\leq j\leq m$ 
$$N_{j}^{0}(k)\leq F_{j}(k).$$
\end{lemma}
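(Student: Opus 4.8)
The goal is to construct, on a common probability space, a coupling of the ball-and-bin process (with $2m$ bins and $m$ balls, each ball placed in the less-full of two independently and uniformly chosen bins) and the min-choice preferential attachment tree $\{P_j\}$, in such a way that at every time $j$ and every threshold $k$ we have $N_j^0(k) \le F_j(k)$. Since $F_j(k)$ is a weighted count (each vertex of degree $\ge k$ contributes $\deg v_i \ge k \ge 1$) while $N_j^0(k)$ is an unweighted count of bins with $\ge k$ balls, it suffices to arrange that the \emph{set} of bins with at least $k$ balls is contained in the set of vertices with degree at least $k$; more precisely, I will maintain the stronger invariant that for a suitable bijective identification of bin $v_i$ with tree-vertex $v_i$, the number of balls in bin $v_i$ at time $j$ is at most $\deg_{P_j} v_i$ for every $i$. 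Indeed, if $(\text{balls in }v_i) \le \deg v_i$ pointwise, then $\{i : \text{balls in }v_i \ge k\} \subseteq \{i : \deg v_i \ge k\}$, whence $N_j^0(k) = \sum_i \one[\text{balls}\ge k] \le \sum_i \one[\deg v_i \ge k] \le \sum_i (\deg v_i)\one[\deg v_i \ge k] = F_j(k)$.

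The coupling is built step by step. I would run the two processes on the same vertex set $\{v_1,\dots\}$, thinking of each new tree-vertex as also being a newly ``activated'' bin, so that at time $j$ exactly $j+1$ bins are in play. At step $j \to j+1$: first draw the two tree-samples $X^1_{j}, X^2_{j}$ from the size-biased law on $V(P_j)$; these produce an edge from the new vertex to $Y_j = \arg\min \deg$. Now I need to drive the ball placement using randomness that is consistent with this. The natural device is to couple so that the two bins chosen uniformly by the ball process are forced to equal (or to be ``dominated by,'' in the sense of having fewer balls than) the two vertices $X^1_j, X^2_j$ chosen by the tree; because the tree samples with probability proportional to degree and degree dominates ball-count under the invariant, a bin with many balls is sampled by the tree at least as often as it would be sampled uniformly by the ball process — this is exactly the intuition the introduction flags (``the two-choice preferential attachment model tends to select higher degree vertices because of size biasing''). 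Concretely one shows that the pushforward of the size-biased pair $(X^1_j,X^2_j)$ stochastically dominates, in an appropriate partial order on pairs of bins ranked by current load, the uniform pair used by the ball model; this gives a coupling under which the less-loaded of the two tree-choices has ball-count no larger than the less-loaded of the two ball-choices, so the bin that actually receives the ball has load at most $\deg_{P_j} Y_j$ before the update, hence at most $\deg_{P_{j+1}} Y_j$ after incrementing both. The new vertex/new bin trivially satisfies $1 \le 1$. This preserves the pointwise invariant, and induction on $j$ finishes the construction.

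The main obstacle is the stochastic-domination step for the \emph{pair} of choices: it is not enough that the marginal law of a single size-biased pick dominates a single uniform pick, because the ``take the minimum'' operation is monotone only with respect to the correct order, and one must verify that the size-biased sampling can be coupled to the uniform sampling simultaneously for both coordinates while respecting the load order and the tie-breaking coin. The clean way to handle this is to reduce to the one-dimensional statement: order the $j+1$ bins by load, $\ell_1 \le \ell_2 \le \dots$, and correspondingly the degrees $d_1 \le d_2 \le \dots$ (one must check these orders are compatible with the identification, which follows from the invariant), observe that $\min$ of two samples is determined by the \emph{ranks} of the two samples in this order, and show that the rank of a size-biased sample stochastically dominates — pointwise from above, i.e. is more likely to be of high rank — the rank of a uniform sample, because higher rank corresponds to higher (or equal) degree, hence higher size-bias weight. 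Since the two samples are independent in each model, the pair of ranks dominates coordinatewise, and taking minima preserves this; the tie-breaking fair coins can simply be shared. Once this rank-domination is in hand the rest is bookkeeping, and I would relegate the elementary verification that ``higher degree $\Rightarrow$ larger size-biased weight $\Rightarrow$ rank domination'' to a short computation.
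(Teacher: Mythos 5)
Your plan replaces the paper's argument with a much stronger pointwise coupling (number of balls in bin $v_i$ at most $\deg_{P_j} v_i$ for a bijective bin--vertex identification), and as written it has two concrete gaps. First, the set-up misdescribes the ball model: the process from~\cite{MRS01} used here has $n=2m$ bins from the start, and each ball chooses between two bins uniform over \emph{all} $2m$ bins, not over $j+1$ ``activated'' bins. At time $j$ the tree has only $j+1\leq m+1$ vertices, so there is no bijective identification of bins with vertices, and balls routinely land in bins that correspond to no existing vertex; the containment $\{\text{bins with}\geq k\text{ balls}\}\subseteq\{\text{vertices of degree}\geq k\}$ therefore cannot be maintained with a fixed identification, and you never explain what identification (e.g.\ a dynamically re-sorted matching of nonempty bins to vertices) you actually use. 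Second, and more seriously, the invariant-preservation step is asserted rather than proved, and the condition you propose to verify is not sufficient. Knowing that the receiving bin's load is at most $\deg_{P_j}Y_j$ does not preserve the pointwise (or sorted, coordinatewise) invariant: the ball may land in a bin whose load already equals the degree of its matched vertex while the tree increments a \emph{different} vertex, and then that coordinate fails after the update. For sorted vectors the analogous rank conditions also fail in general: if the degree vector and load vector are both $(2,1)$ (padded by zeros), incrementing the degree at rank $1$ and the load at rank $2$ destroys coordinatewise domination, even though the tree's choice had the higher rank. So the ``rank of a size-biased sample dominates the rank of a uniform sample, take minima, done'' step --- which you defer as a short computation --- is exactly where the difficulty lies, it involves two samples from populations of different sizes and different laws, and it is not a routine verification.

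For comparison, the paper's proof needs none of this machinery: it couples only the counting functions, by induction on $j$. Given $N_{j}^{0}(k-1)\leq F_{j}(k-1)$ for all $k$, the probability that $N_{j}^{0}(k)$ increases at the next step is
\[
\frac{N_{j}^{0}(k-1)^{2}-N_{j}^{0}(k)^{2}}{(2m)^{2}}\;\leq\;\frac{F_{j}(k-1)^{2}}{(2m)^{2}}\;\leq\;\left(\frac{F_{j}(k-1)}{2j}\right)^{2},
\]
which by~\eqref{eq:evolution} is exactly the probability that $F_{j}(k)$ increases (by at least $1$); since any positive increment of $N_{j}^{0}(k)$ is exactly $1$, the two increments can be coupled so that $N_{j+1}^{0}(k)\leq F_{j+1}(k)$. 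This level-by-level comparison of increment probabilities, using only $2j\leq 2m$ and the induction hypothesis, sidesteps the bin--vertex identification and the min-of-two domination entirely. If you want to keep your pointwise/majorization route, you must (i) specify the matching between the $2m$ bins and the at most $m+1$ vertices and show it can absorb balls landing in unmatched bins, and (ii) prove a genuine preservation lemma for the coupled increments under the two different sampling mechanisms; both are substantial, currently missing pieces.
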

Note that with this lemma, the proof is now complete, as there is a $k'(m) = \log\log m / \log 2 - \Theta(1)$ so that with high probability $N_m^0(k') > 0.$  And so we turn to proving the lemma by induction over $j.$

When $j=1,$ the lemma is trivial, as 
\[
N_{1}^{0}(k)=\1\{k=1\}\leq 2\1\{k=1\}=F_{1}(k),\;k\geq 2.
\]
Suppose the statement is true for $j\leq j_{0}.$ We will show the construction can be extended to $j_{0}+1\leq m$. 
The difference $N_{j_{0}+1}^{0}(k)-N_{j_{0}}^{0}(k)$ takes value 1 with probability 
$$\frac{N_{j_{0}}(k-1)^{2}-N_{j_{0}}(k)^{2}}{n^{2}}\leq\frac{N_{j_{0}}(k-1)^{2}}{n^{2}}\leq \frac{F_{j_{0}}(k-1)^{2}}{n^{2}}.$$
If $j_{0}\leq n/2=m$ this probability does not exceed $\frac{F_{j_{0}}(k-1)^{2}}{(2j)^{2}}$, and hence the difference $N_{j_{0}+1}^{0}(k)-N_{j_{0}}^{0}(k)$ is stochastically dominated by $F_{j_{0}+1}(k)-F_{j_{0}}(k)$. Therefore there is a coupling such that $N_{j_{0}+1}^{0}(k)\leq F_{j_{0}+1}(k).$ 

\end{proof}

\section*{Acknowledgements.} 
The authors are grateful to Professor Itai Benjamini for suggesting the problem and for helpful discussions.

\bibliographystyle{alpha}
\bibliography{po2c_pa}

\begin{thebibliography}{DvdHH10}

\bibitem[ABKU99]{ABKU99}
Yossi Azar, Andrei~Z Broder, Anna~R Karlin, and Eli Upfal.
\newblock Balanced allocations.
\newblock {\em SIAM journal on computing}, 29(1):180--200, 1999.

\bibitem[ADS09]{Achlioptas}
Dimitris Achlioptas, Raissa~M. D'Souza, and Joel Spencer.
\newblock Explosive percolation in random networks.
\newblock {\em Science}, 323(5920):1453--1455, 2009.

\bibitem[BA99]{barabasi}
Albert-L{\'a}szl{\'o} Barab{\'a}si and R{\'e}ka Albert.
\newblock Emergence of scaling in random networks.
\newblock {\em science}, 286(5439):509--512, 1999.

\bibitem[DKM07]{DSKrM}
R.~M. D'Souza, P.~L. Krapivsky, and C.~Moore.
\newblock The power of choice in growing trees.
\newblock {\em The European Physical Journal B}, 59(4):535--543, 2007.

\bibitem[DM09]{DeMo}
Steffen Dereich and Peter M{\"o}rters.
\newblock Random networks with sublinear preferential attachment: degree
  evolutions.
\newblock {\em Electron. J. Probab.}, 14:no. 43, 1222--1267, 2009.

\bibitem[DvdHH10]{DvdHH}
Sander Dommers, Remco van~der Hofstad, and Gerard Hooghiemstra.
\newblock Diameters in preferential attachment models.
\newblock {\em J. Stat. Phys.}, 139(1):72--107, 2010.

\bibitem[FFF05]{FFF04}
Abraham Flaxman, Alan Frieze, and Trevor Fenner.
\newblock High degree vertices and eigenvalues in the preferential attachment
  graph.
\newblock {\em Internet Mathematics}, 2(1):1--19, 2005.

\bibitem[KRL00]{KrReLe}
P.~L. Krapivsky, S.~Redner, and F.~Leyvraz.
\newblock Connectivity of growing random networks.
\newblock {\em Phys. Rev. Lett.}, 85:4629--4632, Nov 2000.

\bibitem[M{\'o}r05]{Mori}
Tam{\'a}s~F. M{\'o}ri.
\newblock The maximum degree of the {B}arab\'asi-{A}lbert random tree.
\newblock {\em Combin. Probab. Comput.}, 14(3):339--348, 2005.

\bibitem[OS05]{OlSp}
Roberto Oliveira and Joel Spencer.
\newblock Connectivity transitions in networks with super-linear preferential
  attachment.
\newblock {\em Internet Math.}, 2(2):121--163, 2005.

\bibitem[Pit94]{Pittel}
Boris Pittel.
\newblock Note on the heights of random recursive trees and random {$m$}-ary
  search trees.
\newblock {\em Random Structures Algorithms}, 5(2):337--347, 1994.

\bibitem[RMS01]{MRS01}
Andrea~W Richa, M~Mitzenmacher, and R~Sitaraman.
\newblock The power of two random choices: A survey of techniques and results.
\newblock {\em Combinatorial Optimization}, 9:255--304, 2001.

\bibitem[RTV07]{RuToVa}
Anna Rudas, B{\'a}lint T{\'o}th, and Benedek Valk{\'o}.
\newblock Random trees and general branching processes.
\newblock {\em Random Structures Algorithms}, 31(2):186--202, 2007.

\bibitem[RW12]{Riordan}
Oliver Riordan and Lutz Warnke.
\newblock Achlioptas process phase transitions are continuous.
\newblock {\em The Annals of Applied Probability}, 22(4):1450--1464, 2012.

\end{thebibliography}

\end{document}